\documentclass[a4paper,11pt]{article}
\usepackage{amsmath,amsthm,amssymb,dsfont,graphicx,xspace,epsfig}
\usepackage[dvipsnames]{xcolor}
\usepackage[T1]{fontenc}
\usepackage[plain]{fullpage}
\usepackage{thm-restate}
\usepackage[hidelinks]{hyperref}
\usepackage{bm}
\usepackage{color}
\usepackage{mathrsfs}
\usepackage{cleveref}
\usepackage[shortlabels]{enumitem}
\usepackage{float}
\usepackage{dutchcal}
\usepackage{pict2e}
\usepackage{mathtools}
\usepackage{tikz}
\usepackage{subfigure}
\usepackage{xstring,refcount}
\usepackage{authblk}

\usetikzlibrary{fit, backgrounds, matrix, arrows.meta}
\usetikzlibrary{calc,arrows,positioning}
\usetikzlibrary{decorations}
\usetikzlibrary{decorations.pathmorphing}
\usetikzlibrary{decorations.pathreplacing}
\usetikzlibrary{decorations.shapes}
\usetikzlibrary{decorations.text}
\usetikzlibrary{decorations.markings}
\usetikzlibrary{decorations.fractals}
\usetikzlibrary{decorations.footprints}

\tikzset{vertex/.style = {circle,fill=black,minimum size=6pt, inner sep=0pt, outer sep=3pt}}
\tikzset{svertex/.style = {circle,fill=black,minimum size=5pt, inner sep=0pt, outer sep=2pt}}
\tikzset{labelledvertex/.style = {circle,fill=none,draw,very thick, inner sep=2pt, outer sep=1pt}}
\tikzset{Xrect/.style = {rectangle,fill=none,draw,very thick, minimum width=1.2cm, minimum height=0.8cm, outer sep=0pt}}
\tikzset{edge/.style = {thick,dashed,->,> = latex,dash pattern={on 5pt off 4pt}}}
\tikzset{arc/.style = {thick,->,> = latex}}
\tikzset{sarc/.style = {thin,->,> = latex}}
\tikzset{digon/.style = {thick,<->,> = latex}}
\tikzset{bigvertex/.style = {shape=circle,draw}}

\definecolor{g-blue}{rgb}{0.0, 0.5, 1.0}
\definecolor{g-green}{rgb}{0.4, 0.9, 0.4522}

\newtheorem{theorem}{Theorem}
\newtheorem{lemma}[theorem]{Lemma}
\newtheorem{corollary}[theorem]{Corollary}
\newtheorem{proposition}[theorem]{Proposition}
\newtheorem{conjecture}[theorem]{Conjecture}

\newtheorem{claim}{Claim}[theorem]
\newcounter{oldthm}{}
\def\@extractthmnum#1.#2{#2}

\newtheorem{remark}[theorem]{Remark}

\theoremstyle{definition}

\newenvironment{proofclaim}[1][]{\par\noindent {\it Proof of claim}. }{ \hfill$\lozenge$\par\addvspace{6pt plus 6pt}}

\newcommand{\opentriangle}{
  \raisebox{0.2pt}{\makebox[0.77778em]{
    \setlength{\unitlength}{0.6em}
    \linethickness{0.4pt}
    \begin{picture}(1,1)
    \polygon(0,0)(1,0)(1,1)
    \end{picture}
  }}
}

\newcommand{\Ad}{\mathrm{Ad}}

\newcommand{\cho}{\chi_{\ell}}
\newcommand{\dic}{\vec{\chi}}
\newcommand{\dicho}{\dic_{\ell}}

\makeatletter
\newcommand{\overleftrightsmallarrow}{\mathpalette{\overarrowsmall@\leftrightarrowfill@}}
\newcommand{\overrightsmallarrow}{\mathpalette{\overarrowsmall@\rightarrowfill@}}
\newcommand{\overleftsmallarrow}{\mathpalette{\overarrowsmall@\leftarrowfill@}}
\newcommand{\overarrowsmall@}[3]{%
  \vbox{%
    \ialign{%
      ##\crcr
      #1{\smaller@style{#2}}\crcr
      \noalign{\nointerlineskip}%
      $\m@th\hfil#2#3\hfil$\crcr
    }%
  }%
}
\def\smaller@style#1{%
  \ifx#1\displaystyle\scriptstyle\else
    \ifx#1\textstyle\scriptstyle\else
      \scriptscriptstyle
    \fi
  \fi
}
\makeatother

\newcommand{\EE}{\mathbb{E}}
\newcommand{\PP}{\mathbb{P}}
\newcommand{\e}{\mathrm{e}}

\newcommand{\Bscr}{\mathscr{B}}

\newcommand{\Rscr}{\mathscr{R}}

\renewcommand{\epsilon}{\varepsilon}

\renewcommand{\phi}{\varphi}

\let\leq\leqslant
\let\geq\geqslant

\hypersetup{
    colorlinks,
    linkcolor={red!50!black},
    citecolor={blue!50!black},
    urlcolor={blue!80!black}
}

\title{On the list version of a conjecture of Erd\H{o}s and Neumann-Lara}

\author[,1]{Ararat Harutyunyan\thanks{Research supported by ANR-21-CE48-0012.}}
\author[,2]{Lucas Picasarri-Arrieta\thanks{Research supported by JST ASPIRE JPMJAP2302.}}
\author[,1]{Gil Puig i Surroca\protect\footnotemark[1]$^,$\thanks{Research supported by MICINN PID2022-137283NB-C22.}}

\affil[1]{LAMSADE, Université Paris Dauphine - PSL, Paris, France}
\affil[2]{National Institute of Informatics, Tokyo, Japan}
\date{}

\begin{document}

\maketitle

\begin{abstract}

The \emph{dichromatic number} of a digraph $D$, denoted by $\vec{\chi}(D)$, is the smallest number of colours required to colour the vertices of $D$ such that each colour class induces an acyclic digraph. A conjecture of Erd\H{o}s and Neumann-Lara \cite{erdosPNCN1979} states that there exists a function $f(k)$ such that for every graph $G$ with $\chi(G) \geq f(k)$ there is an orientation of $G$ such that the resulting digraph $D$ satisfies $\vec{\chi}(D) \geq k$. We prove the list version of this conjecture: if $G$ has large list chromatic number then there is an orientation of $G$ such that the resulting digraph has large list dichromatic number. The main tool in our result is the following theorem, which is an extension of an analogous result of Alon \cite{alonRSA16} for the chromatic number: every graph of minimum degree $d$ admits an orientation such that the resulting digraph has list dichromatic number of order at least $\ln d$.

\medskip

\noindent{}{\bf Keywords:} list chromatic number, list dichromatic number, minimum degree, Erd\H{o}s--Neumann-Lara conjecture.
\end{abstract}

\section{Introduction }

A \emph{dicolouring} of a digraph $D$ is a function $\varphi\colon V(D)\rightarrow\mathbb Z^+$ such that, for each $i\in\mathbb Z^+$, the subgraph of $D$ induced by $\varphi^{-1}(i)$ has no directed cycles, and the \emph{dichromatic number} $\vec\chi(D)$ of $D$ is the minimum of $|\varphi(V(D))|$ over all dicolourings $\varphi$ of $D$. This notion, introduced by Erd\H os and Neumann-Lara~\cite{erdosPNCN1979,neumannlaraJCT33}, generalises the chromatic number of graphs in the following sense: if $D$ is the digraph obtained from a graph $G$ by replacing its edges by pairs of oppositely oriented arcs, then $\vec\chi(D)=\chi(G)$.

An \emph{orientation} of a graph $G$ is a digraph with underlying graph $G$ that has at most one arc between each pair of vertices. Let $\vec\chi(G)$ denote the maximum of $\vec\chi(D)$ over all orientations~$D$ of~$G$. Note that we always have $\vec\chi(G)\leq\chi(G)$. A famous open problem of Erd\H os and Neumann-Lara suggests that, conversely, $\chi(G)$ can be bounded from above in terms of $\vec\chi(G)$. 

\begin{conjecture}[\cite{erdosPNCN1979}]\label{conj:ENL} 
For every integer $k$ there is an integer $f(k)$ such that, for every graph $G$, $\chi(G)\geq f(k)$ implies $\vec\chi(G)\geq k$.
\end{conjecture}

For instance, one can take $f(1)=1$ and $f(2)=3$, but it is already unclear whether $f(3)$ exists or not. Only few results are known regarding this conjecture. Notably, Mohar and Wu~\cite{moharFMS4} were able to prove its fractional version; a similar statement for the inverse of the acyclicity ratio holds as well~\cite{puigTHESIS}. In the present paper, we address and settle the list version of the conjecture.

\begin{conjecture}[\cite{puigTHESIS}] \label{conj:ENL_list}
    For every integer $k$ there is an integer $f(k)$ such that, for every graph $G$, $\chi_{\ell}(G)\geq f(k)$ implies $\vec\chi_{\ell}(G)\geq k$.
\end{conjecture}

List colouring was introduced independently by Vizing~\cite{vizingDA29} and Erd\H{o}s, Rubin, and Taylor~\cite{erdosCN26}.
A {\it $k$-list assignment} $L$ of a graph $G$ is a function associating to each vertex $v$ of $G$ a finite set $L(v)$ of at least $k$ positive integers. An {\it $L$-colouring} is a colouring $\varphi$ of $G$ (a function $\phi\colon V(G) \to \mathbb{\mathbb Z^+}$) such that $\phi(v)\in L(v)$ for every vertex $v\in V(G)$.
A colouring $\phi$ of $G$ is {\it proper} if adjacent vertices receive distinct colours through $\phi$, and the {\it choosability} $\cho(G)$ of $G$ is the least integer $k$ such that, for every $k$-list assignment $L$, $G$ admits a proper $L$-colouring.

Clearly, every graph $G$ satisfies $\cho(G)\geq \chi(G)$. However, $\cho(G)$ cannot be bounded in terms of $\chi(G)$: Erd\H{o}s, Rubin, and Taylor~\cite{erdosCN26} showed that the complete bipartite graph $K_{n,n}$ has choosability $\Omega(\ln n)$.
In 2000, Alon~\cite{alonRSA16}, qualitatively improving on this result, proved that every graph $G$ with minimum degree $d$ has choosability $\Omega(\ln d)$, which is essentially best possible as $K_{n,n}$ has choosability $O(\ln n)$~\cite{erdosCN26}. The asymptotically optimal constant factor was finally determined by Saxton and Thomason~\cite{saxtonIM201}.

Analogously to the undirected case, the {\it dichoosability} $\dicho(D)$ of a digraph $D$ is the least integer $k$ such that, for every $k$-list assignment $L$ of $D$, $D$ admits an $L$-dicolouring. 
For an undirected graph $G$, we denote by $\dicho(G)$ the maximum of $\dicho(D)$ over all the orientations $D$ of~$G$. This directed version of list colouring was first introduced by Bensmail, Harutyunyan, and Le~\cite{bensmailJGT87}, who proved that $\dic(K_{n,n}) = \Theta(\ln n)$, hence extending the original result of Erd\H{o}s, Rubin, and Taylor. We subsume both this result and Alon's by proving the following.

\begin{theorem}
    \label{thm:main}
    Every graph $G$ with minimum degree $d$ satisfies 
    $\displaystyle
    \dicho(G) \geq (\tfrac{1}{3}-o(1))\log_2 d$.
\end{theorem}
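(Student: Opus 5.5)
We adapt Alon's probabilistic proof that minimum degree $d$ forces choosability $\Omega(\ln d)$ to the directed setting. The plan is to orient $G$ at random, assign lists at random, and show that with positive probability no $L$-dicolouring exists. Set $k=(\tfrac13-o(1))\log_2 d$ and work with a palette $[s]$ of size $s=k^4$ (any polynomial in $k$ that is $d^{o(1)}$ suffices).

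First we reduce to a convenient structure. We pass to a subgraph of large minimum degree that contains a bipartite-like configuration: a small set $A$ of size about $d/k^{c}$ (or $d^{1/3+o(1)}$) and a large set $B$ such that every $b\in B$ has many neighbours in $A$. The choice of the $1/3$ constant suggests the split $|A|\approx d^{1/3}$, with each $b$ having at least $d^{2/3}$ neighbours in $A$, or some similar balance. The standard way to get this is Alon's trick: pick $A$ randomly with probability $p$. Since $G$ has minimum degree $d$ (after taking a subgraph with minimum degree $d$ and average degree at most $2d$), most vertices outside $A$ have about $pd$ neighbours in $A$.

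Next we choose the orientation and the lists.
\begin{itemize}
\item \emph{Orientation.} Orient each edge between $A$ and $B$ uniformly at random. Edges inside $A$ or inside $B$ are oriented arbitrarily; they only help.
\item \emph{Lists.} Give each vertex of $A\cup B$ a uniformly random $k$-subset of $[s]$.
\end{itemize}
Fix any dicolouring $\phi$ of $A$ from its lists. We want some $b\in B$ whose every colour $c\in L(b)$ creates a monochromatic directed cycle. The easiest witness is a directed $2$-path through $b$: vertices $a,a'\in N(b)\cap A$ with $\phi(a)=\phi(a')=c$, $a\to b\to a'$, and $a'\to a$ (or any arc $a'a$ available). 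Since $A$ itself may be independent, we instead use directed cycles of length $4$: $b\to a\to b'\to a'\to b$. This suggests grouping $B$ into pairs or using a second layer. A cleaner route is for $b$ to be blocked by colour $c$ whenever $c$ appears on an in-neighbour and on an out-neighbour of $b$ in $A$ that are joined by a suitable arc. To keep this tractable, I would instead build a three-layer structure $A_1,A_2,B$, with the required arcs $A_1\to A_2$ supplied by the random orientation. Balancing the three layers is exactly what yields the exponent $\tfrac13$.

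The key estimate comes from a union bound over partial colourings. The number of colourings $\phi$ of the small layers is at most $k^{|A_1|+|A_2|}$. For a fixed $\phi$, the set of colours $C_b$ that would be blocked at $b$ contains, with high probability over the orientation, every colour appearing at least once on each side of $b$'s neighbourhood. We then show that $\phi$ uses, on the neighbourhood of most $b$, a set $S$ of at least $s/2$ colours; this follows by a counting argument in Alon's style, since a colouring with lists of size $k$ must cover a large portion of $[s]$. Given that, the probability that $L(b)\subseteq C_b$ is at least $2^{-k}$. These events are independent over many $b$, so the probability that $\phi$ extends is at most $(1-2^{-k})^{|B'|}\le \exp(-2^{-k}|B'|)$. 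This has to beat $k^{|A_1|+|A_2|}$ times the number of choices of lists on $A$, which requires $2^{k}\lesssim |B'|/|A|$. With layers of sizes $d^{1/3}$, $d^{2/3}$, $d$, this gives $k\approx\tfrac13\log_2 d$.

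I expect the main obstacle to be making the independence and the covering claim hold simultaneously. The random orientation on $A$–$B$ edges must be independent across different $b$, yet the set $S$ of usable colours depends on $\phi$. I would handle this by first fixing the orientation and lists on the small layers, then conditioning, and using a "good $\phi$" lemma: every $L$-dicolouring of the small layers has many colours each appearing on both an in- and an out-neighbourhood structure. This lemma is proved by showing that any colour class must be acyclic, which limits how concentrated $\phi$ can be. The $o(1)$ terms absorb the losses from the degree regularization and the polynomial palette.
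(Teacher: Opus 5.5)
\textbf{There is a genuine gap, at exactly the step where the directed setting departs from Alon's argument.} In the undirected case, $b$ is blocked by $c$ as soon as one pre-coloured neighbour of $b$ has colour $c$. Here $c$ is blocked at $b$ only if $b$ lies on a directed cycle all of whose vertices are coloured $c$. That requires a $c$-coloured directed path from an out-neighbour of $b$ back to an in-neighbour of $b$.

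Each witness you propose fails in general:
\begin{itemize}
\item The $4$-cycle uses a second uncoloured vertex $b'$, which breaks your per-vertex scheme.
\item The directed triangle, and its three-layer variant with arcs $A_1\to A_2$, need a triangle through $b$ and a monochromatic edge inside the pre-coloured layer. Neither is guaranteed: $G$ may be triangle-free (already $K_{d,d}$) or have large girth.
\item More basically, nothing in your argument stops $\phi$ from being a proper colouring of $G[A]$. For such a $\phi$ no witness exists at any $b$, so the bound $(1-2^{-k})^{|B'|}$ fails for it.
\end{itemize}
So the assertion that $C_b$ contains every colour seen on both sides of $b$ is false in general, and the deferred ``good $\phi$ lemma'' is the entire difficulty. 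The justification you give for it points the wrong way. Acyclicity of the colour classes is a constraint on $\phi$; it cannot create the monochromatic directed paths you need. Also, the layer sizes $d^{1/3},d^{2/3},d$ are chosen to reproduce the constant $\tfrac13$, not derived from an estimate.

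\textbf{The missing idea is to abandon per-vertex blocking, make the obstruction global, and choose the orientation only after the lists.}
\begin{enumerate}
\item Via a maximum cut and then the Kühn--Osthus lemma, the paper passes to a bipartite $G=(A\cup B,E)$ with $|A|\geq r2^{r+3}|B|$ and every vertex of $A$ of degree at least $r^62^{2r+8}$.
\item It fixes lists by random $r$-subsets of $[r^2]$. These are chosen first on $B$, so that half of $A$ is ``saturated''. They are then chosen on $A$, with a union bound over the $r^{|B|}$ colourings of $B$.
\item The result is that every $L$-colouring $\gamma$ of the \emph{whole} graph has a subgraph $H_\gamma$ with at least $n/2^{r+4}$ vertices, average degree at least $r2^{r+6}$, and only monochromatic edges.
\item Now take a uniformly random orientation $D$. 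By the Manber--Tompa bound, $D[H_\gamma]$ is acyclic with probability at most $2^{-rn}$, which beats the union bound over all $r^n$ colourings.
\item Any directed cycle in $H_\gamma$ lies in a single component and is therefore monochromatic. No short cycles or local structure around individual vertices are needed.
\end{enumerate}
The $\tfrac13$ comes from three factors of $2^r$. Two are in the degree condition: one for saturation, and one for the average degree of order $2^r$ required of $H_\gamma$. The third is the ratio $|A|/|B|$.
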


Again, the bound is essentially tight, as $\dicho(K_{n,n})\leq \log_2 n+2$, see~\cite[Theorem~3.2]{bensmailJGT87}.
Our proof actually shows that almost all orientations $D$ of $G$ satisfy $\dicho(D) = \Omega(\ln d)$. As a direct corollary, we confirm Conjecture~\ref{conj:ENL_list}.

\begin{corollary}
    For every graph $G$, $\dicho(G) \geq (\tfrac{1}{3}-o(1))\log_2\cho(G)$.
\end{corollary}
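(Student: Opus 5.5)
The corollary follows from Theorem~\ref{thm:main} by passing to a subgraph of large minimum degree, using degeneracy. Let $k=\cho(G)$.

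\textbf{Step 1: a subgraph of large minimum degree.} Every graph $H$ is $(\delta^*(H)+1)$-choosable, where $\delta^*(H)$ is its degeneracy, i.e.\ the maximum of the minimum degree over all subgraphs of $H$. Indeed, order the vertices so that each has at most $\delta^*(H)$ neighbours preceding it, and colour greedily from the lists in that order. Applying this to $H=G$ gives $k \leq \delta^*(G)+1$. Hence $G$ contains a subgraph $G'$ with minimum degree $d \geq k-1$. (One may take $G'$ to be the nonempty core obtained by repeatedly deleting vertices of degree less than $k-1$.)

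\textbf{Step 2: apply Theorem~\ref{thm:main} to $G'$.} This gives an orientation $D'$ of $G'$ with
\[
\dicho(D') \geq \left(\tfrac13-o(1)\right)\log_2 (k-1) = \left(\tfrac13-o(1)\right)\log_2 k .
\]

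\textbf{Step 3: monotonicity under subgraphs.} Extend $D'$ to an orientation $D$ of $G$ by orienting the remaining edges arbitrarily. Suppose $L'$ is a list assignment of $D'$ with no $L'$-dicolouring. Extend $L'$ to all of $V(G)$ by giving each vertex outside $V(G')$ a list of the same size, made of fresh colours. Any $L$-dicolouring of $D$ would restrict to an $L'$-dicolouring of $D'$, because an induced subdigraph of an acyclic digraph is acyclic. So no $L$-dicolouring of $D$ exists, and $\dicho(G)\geq\dicho(D)\geq\dicho(D')$.

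\textbf{Interpretation of $o(1)$.} The $o(1)$ term is read as $k\to\infty$; for bounded $k$ the statement is trivial after adjusting the $o(1)$ term. There is no real obstacle: all the difficulty lies in Theorem~\ref{thm:main}. The only point requiring care is that the minimum-degree bound is $k-1$ rather than $k$, and this is absorbed into the $o(1)$ term.
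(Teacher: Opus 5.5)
Your proof is correct and follows the paper's argument. The paper obtains a subgraph of minimum degree at least $k-1$ by taking a minimal subgraph with choosability $k$, which is just the contrapositive of your degeneracy and greedy-colouring argument. It then applies Theorem~\ref{thm:main} and uses the monotonicity $\dicho(G)\geq\dicho(H)$ for subgraphs $H$; you verify that step explicitly, whereas the paper takes it for granted.
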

\begin{proof}
Assume that $\chi_{\ell}(G)=k$. Then $G$ has a subgraph $H$ with minimum degree at least $k-1$. If not, let $G'$ be a minimal subgraph of $G$ with $\chi_{\ell}(G')=k$ and let $v$ be a vertex of $G'$ of degree at most $k-2$. Then, for every $(k-1)$-list assignment $L$ of $G'$, a proper $L$-colouring of $G'$ can be found by extending a proper $L|_{V(G')\setminus\{v\}}$-colouring of $G'-v$ to $v$. Thus $\chi_{\ell}(G')\leq k-1$, a contradiction.  By Theorem~\ref{thm:main}, $\vec\chi_{\ell}(G)\geq\vec\chi_{\ell}(H)\geq(\tfrac{1}{3}-o(1))\log_2(k-1)=(\tfrac{1}{3}-o(1))\log_2 \chi_{\ell}(G)$.
\end{proof}

Determining the best possible $f$ for Conjecture~\ref{conj:ENL_list} remains open. For all we know, even $\dicho(G)=\Omega\left(\cho(G)/\ln\cho(G)\right)$ could hold in general, similarly to what happens with the fractional dichromatic number and the inverse of the acyclicity ratio~\cite{moharFMS4,puigTHESIS}. This is linked to an older, analogous problem concerning improper colourings~\cite{Kang13}.

In view of Theorem~\ref{thm:main}, a natural question is whether every digraph $D$ with high minimum in-degree $\delta^-(D)$ and minimum out-degree $\delta^+(D)$ has large dichoosability. It turns out that this is not the case, as shown by the following construction due to Rambaud {\it et al.}\footnote{The construction was obtained by Rambaud, during discussions with Aboulker, Havet, Lochet, Lopes and Picasarri-Arrieta.}~\cite{perso_rambaud}. 
A \emph{tournament} is an orientation of a complete graph.

\begin{theorem}[Rambaud {\it et al.}~\cite{perso_rambaud}]
    For every positive integer $d$, there exists a tournament $T$ with $\min(\delta^-(T), \delta^+(T)) \geq d$ and $\dicho(T) \leq 2$.
\end{theorem}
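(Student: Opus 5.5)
The plan is an explicit construction together with a greedy colouring along a fixed vertex ordering. We build $T$ on an ordered vertex set $v_1,\dots,v_n$, where almost every arc points forward (from smaller to larger index). The few \emph{backward} arcs are placed so that, for any $2$-list assignment $L$, we can colour the vertices in order and never close a monochromatic directed cycle. The basic observation behind this: if $C$ is a directed cycle and $v$ is its vertex of largest index, then the arc of $C$ leaving $v$ is backward. So when $v$ is coloured, a monochromatic cycle through $v$ can only be created via a backward out-arc of $v$. If every backward out-neighbour of $v$ were forced to share one colour, or were unique, $v$ could simply avoid that colour, since $|L(v)|=2$.

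\textbf{Main obstacle.} A single global ordering cannot work as it stands. The last vertex $v_n$ needs out-degree at least $d$, and all its out-arcs are backward. Symmetrically, $v_1$ needs in-degree $d$, so it must receive at least $d$ backward arcs. The tournament must therefore contain a bounded number of ``layers'' of backward arcs whose interaction we can still control.

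I would first try a block structure $A_1,\dots,A_r$ of consecutive intervals in the ordering. Each block is a transitive tournament with its internal arcs forward, and all arcs between blocks are forward except those from the last block back to the first, with $|A_1|,|A_r|\geq d$. Then every vertex has in- and out-degree at least $d$: interior vertices use forward arcs in both directions, the first block gains in-arcs from the last, and the last block gains out-arcs to the first. Every directed cycle must use an arc from the last block to the first, and must then travel forward through blocks.

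The colouring step is to choose, from the $2$-lists, colours for the end blocks $A_1$ and $A_r$ first. We then colour the middle blocks so that, for each colour $c$, either $c$ is absent from $A_1$ or $A_r$, or no forward monochromatic path in colour $c$ links a $c$-vertex of $A_1$ to a $c$-vertex of $A_r$.

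\textbf{Where this attempt may fail.} I expect the analysis of this step to be the heart of the matter. With only three blocks (a blown-up directed triangle), an adversary giving every block the lists $\binom{[5]}{2}$ defeats every colouring. The reason is that a colour appearing in all three parts yields a monochromatic triangle, and three independent sets in $K_5$ (each a single vertex) cannot cover five colours. So the construction has to be subtler than a blow-up of a small tournament. The interior must be shaped so that the forward ``reachability'' used by cycles is itself sparse, or the backward arcs must be arranged in a nested, laminar pattern. In the laminar version each vertex's backward out-neighbourhood is a set that the greedy procedure has already forced to avoid one of $v$'s two colours.

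What I would try first is an inductive, recursive construction. Start from a transitive tournament and repeatedly add a new vertex whose backward out-neighbours form an interval that has already been made monochromatic-free for one colour of its list. Then check by induction along the ordering that the greedy choice $\varphi(v)\in L(v)$ avoiding the colour(s) of the relevant earlier vertices always exists and closes no cycle. Finally, verify the degree conditions by making every interval used have length at least $d$.
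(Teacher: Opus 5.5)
\textbf{There is a genuine gap: the proposal never produces a tournament, and its colouring strategy cannot work as stated.} Your starting observation is the right one, and it is what the paper uses. Relative to a fixed ordering, every directed cycle contains a backward arc. So any $L$-colouring in which no backward arc is monochromatic is an $L$-dicolouring; such a colouring is a proper colouring of the undirected graph of backward arcs, the \emph{backedge graph}. The failures are these:
\begin{itemize}
\item A greedy colouring along the same ordering that defines $T$ cannot succeed. The last vertex has at least $d$ backward out-neighbours, all coloured before it, and nothing stops them from using both colours of its list.
\item Your recursive remedy mixes construction with colouring. $T$ must be fixed before the adversary chooses $L$, so no neighbourhood can be prepared for ``one colour of its list'' at construction time.
\item A nested or laminar arrangement points the wrong way. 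A vertex lying in the backward neighbourhoods of two different vertices, or also having its own backward out-neighbours, may have to avoid two distinct colours. That is impossible when its list is exactly those two colours.
\item Your block construction, with every arc from $A_r$ to $A_1$ backward, makes the backedge graph complete bipartite between $A_1$ and $A_r$. This is exactly the dense backward structure your own three-block counterexample exploits.
\end{itemize}

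\textbf{The missing idea is to spread the backward arcs so that the backedge graph is a forest.} Forests are $2$-choosable: colour each component outward from a root. This colouring order has nothing to do with the ordering that defines $T$, and that decoupling is what your greedy lacked. The paper's construction is as follows.
\begin{itemize}
\item Take $n=2d(d+1)$ vertices $u_1,\dots,u_n$ in $2(d+1)$ consecutive blocks $U_1,\dots,U_{2d+2}$ of size $d$.
\item For $i\in[1,d]$, the vertex $u_i\in U_1$ receives backward arcs from every vertex of its own private block $U_{d+1+i}$.
\item For $i\in[1,d]$, the vertex $u_{n+1-i}\in U_{2d+2}$ sends backward arcs to every vertex of its own private block $U_{1+i}$.
\item All remaining arcs point forward.
\end{itemize}
Then each $u_i$ gets in-degree $d$ and each $u_{n+1-i}$ gets out-degree $d$. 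It is routine to check that every other in- and out-degree is at least $d$ using forward arcs. The backedge graph is a vertex-disjoint union of $2d$ stars $K_{1,d}$. Hence for every $2$-list assignment $L$ it has a proper $L$-colouring: colour the centres first, then let each leaf avoid its centre's colour. That colouring is an $L$-dicolouring of $T$.
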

\begin{proof}
    Let $T$ be the tournament defined as follows. We let $T$ contain $n=2d(d+1)$ vertices labelled $u_1,\dots,u_n$, that we partition into $r=2(d+1)$ sets $U_1,\dots,U_{r}$ of size $d$, where $U_i = \{u_{1+d(i-1)}, \ldots, u_{di}\}$ for every $i\in [1,r]$.
    We add to $T$ the arcs of the sets
    \[
        \{ vu_i : i\in [1,d] \mbox{~and~} v\in U_{1+d+i}\}
    \mbox{~~~and~~~} 
        \{u_{n+1-i}v : i\in [1,d] \mbox{~and~} v\in U_{1+i}\}.
    \]
    For every remaining pair of distinct non-adjacent vertices $u_i$ and $u_j$, we add the arc $u_iu_j$ if $i<j$ and the arc $u_ju_i$ otherwise.
    It is straightforward to check that every vertex in $T$ has in- and out-degree at least~$d$.

    Note that, for every digraph $D$ and ordering $\prec$ on $V(D)$, any proper colouring of the undirected graph $D^\prec$ with vertex set $V(D)$ containing an edge $uv$ whenever $v\prec u$ and $uv \in A(D)$ (called the \emph{backedge graph} of $D$ with respect to $\prec$) yields a dicolouring of $D$. To see this, it is sufficient to notice that every independent set of $D^\prec$ induces an acyclic digraph on $D$.

    By construction, the backedge graph $G$ of $T$ with respect to the ordering $u_1,\dots, u_n$ is a disjoint union of stars, and in particular $\dicho(T) \leq \cho(G) \leq 2$.
\end{proof}

Relating to what upper bounds could be a counterpart to Theorem~\ref{thm:main}, a remarkable conjecture of Alon and Krivelevich~\cite{alonAC2} asserts that the choice number of bipartite graphs of maximum degree $d$ is at most $O(\ln d)$. The best current bound, due to Bradshaw, Mohar, and Stacho~\cite{bradshawArXiv24}, is $(\frac{4}{5}+o(1))\frac{d}{\ln d}$; in parallel, the asymmetric version of the conjecture has also been treated~\cite{alonkang}. We do not know if the analogous question for oriented bipartite graphs is easier. 

\section{Preliminaries}

Our proof of Theorem~\ref{thm:main} combines probabilistic arguments with a trick due to Kühn and Osthus~\cite{kuhnComb24}. Similarly to the proof of Alon~\cite{alonRSA16}, we build our list assignment $L$ randomly. In the original proof of Alon, it is shown that, with positive probability, at least one edge of $G$ is monochromatic in every $L$-colouring. We push the analysis further and actually obtain that, in every $L$-colouring, a large subgraph of $G$ is monochromatic. 
It is large enough so that, with positive probability, in a random orientation of $G$, none of these possible monochromatic large subgraphs is acyclic.

The remaining of this section is a collection of well-known results used later on.

\begin{lemma}[{\sc Chernoff}]
    \label{lem:chernoff}
    If $X$ is a random variable following a binomial law with parameters $p \in [0, 1]$ and $n \geq 0$, with expectation $\EE(X) = \mu = np$, then
    \[
        \PP (X\leq (1-\epsilon)\mu) \leq \exp(-\tfrac{\epsilon^2}{2}\mu)
    \]
    for any $0<\epsilon < 1$.
\end{lemma}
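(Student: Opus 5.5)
The statement is the standard lower-tail Chernoff bound, and I will prove it by the exponential-moment method applied to $-X$. Write $X=\sum_{i=1}^n X_i$ with $X_i$ independent Bernoulli($p$) variables. For any $t>0$, Markov's inequality applied to the nonnegative variable $e^{-tX}$ gives
\[
\PP(X\leq (1-\epsilon)\mu)=\PP\bigl(e^{-tX}\geq e^{-t(1-\epsilon)\mu}\bigr)\leq e^{t(1-\epsilon)\mu}\,\EE\bigl(e^{-tX}\bigr).
\]

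By independence, $\EE(e^{-tX})=\prod_i (1-p+pe^{-t})=\prod_i\bigl(1+p(e^{-t}-1)\bigr)$. Using $1+x\leq e^x$ on each factor, this is at most $\exp\bigl(\mu(e^{-t}-1)\bigr)$. Hence
\[
\PP(X\leq (1-\epsilon)\mu)\leq \exp\bigl(\mu(e^{-t}-1+t(1-\epsilon))\bigr).
\]
The exponent is minimised at $t=-\ln(1-\epsilon)>0$, which is valid since $0<\epsilon<1$. Substituting gives the bound
\[
\left(\frac{e^{-\epsilon}}{(1-\epsilon)^{1-\epsilon}}\right)^{\mu}=\exp\bigl(-\mu\,[\epsilon+(1-\epsilon)\ln(1-\epsilon)]\bigr).
\]

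It then remains to check the elementary inequality
\[
g(\epsilon):=\epsilon+(1-\epsilon)\ln(1-\epsilon)\geq \frac{\epsilon^2}{2}\quad\text{for }\epsilon\in(0,1).
\]
Set $h(\epsilon)=g(\epsilon)-\epsilon^2/2$. Then $h(0)=0$ and $h'(\epsilon)=-\ln(1-\epsilon)-\epsilon$. This derivative is nonnegative because $-\ln(1-\epsilon)=\sum_{k\ge1}\epsilon^k/k\geq\epsilon$. So $h\geq0$ on $[0,1)$, which yields the claimed bound $\exp(-\tfrac{\epsilon^2}{2}\mu)$.

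The only step needing any care is this final calculus inequality. Since the Taylor expansion $g(\epsilon)=\sum_{k\geq2}\epsilon^k/(k(k-1))$ has all coefficients nonnegative and leading term $\epsilon^2/2$, it is routine.
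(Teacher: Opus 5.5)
Your proof is correct. The Markov step, the bound $\EE(e^{-tX})\le\exp\bigl(\mu(e^{-t}-1)\bigr)$, the choice $t=-\ln(1-\epsilon)$, and the inequality $\epsilon+(1-\epsilon)\ln(1-\epsilon)\ge\epsilon^2/2$ all check out, and the degenerate case $\mu=0$ is trivially covered.

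The paper gives no proof of this lemma. It quotes it as a well-known tool and only uses it with $\epsilon=\tfrac12$, so your standard exponential-moment argument supplies exactly what the citation relies on.
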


\begin{lemma}[\cite{manberSJC13}]
    \label{lem:prob_orientation_acyclic}
    Let $G$ be a graph with average degree $\Gamma$ and order $n$, and let $D$ be a random orientation of $G$ with the uniform distribution. Then 
    \[
        \PP(D \mbox{ is acyclic}) \leq \left(\frac{\Gamma+1}{2^{\frac{\Gamma}{2}}} \right)^{n}.
    \]
\end{lemma}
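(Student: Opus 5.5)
The plan is to count acyclic orientations directly. Since $D$ is uniform over the $2^{|E(G)|}$ orientations of $G$, and $|E(G)| = n\Gamma/2$, we have
\[
\PP(D \mbox{ is acyclic}) = \frac{a(G)}{2^{n\Gamma/2}},
\]
where $a(G)$ is the number of acyclic orientations of $G$. The whole argument therefore reduces to proving $a(G)\leq(\Gamma+1)^n$, and I will get this in two steps.

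\textbf{Step 1: $a(G)\leq\prod_{v\in V(G)}(d(v)+1)$.} I will show that the map sending an acyclic orientation $D$ to its in-degree vector $(d^-_D(v))_{v\in V(G)}$ is injective. Each coordinate lies in $\{0,\dots,d(v)\}$, so injectivity gives the bound.

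For injectivity, suppose two acyclic orientations $D,D'$ have the same in-degree vector, and argue by induction on $n$.
\begin{itemize}
\item Since $D$ is acyclic and nonempty, the set $S$ of vertices with in-degree $0$ is nonempty, and it is the same set in $D$ and $D'$.
\item The set $S$ is independent, and in both orientations every edge incident to $S$ is oriented out of $S$. So $D$ and $D'$ agree on all edges touching $S$.
\item Delete $S$ and decrease the in-degree of each remaining vertex by its number of neighbours in $S$. What remains are two acyclic orientations of $G-S$ with equal in-degree vectors, so by induction they coincide.
\end{itemize}
Hence $D=D'$.

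\textbf{Step 2: AM--GM.} By the arithmetic--geometric mean inequality,
\[
\prod_v (d(v)+1)\leq\Bigl(\frac1n\sum_v (d(v)+1)\Bigr)^n=(\Gamma+1)^n.
\]
Combining this with Step 1 and the displayed identity gives $\PP(D \mbox{ is acyclic})\leq(\Gamma+1)^n/2^{n\Gamma/2}$, which is the claimed bound.

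The only non-routine point is the injectivity of the in-degree vector. It follows cleanly from the source-peeling argument above, so I do not expect a real obstacle.
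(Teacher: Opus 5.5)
Your proof is correct and complete. The source-peeling induction shows that an acyclic orientation is determined by its in-degree vector, so $G$ has at most $\prod_{v}(d(v)+1)\leq(\Gamma+1)^n$ acyclic orientations among its $2^{n\Gamma/2}$ orientations, which gives exactly the stated bound. The paper gives no proof of its own here and imports the lemma from Manber and Tompa; their result is this same upper bound on the number of acyclic orientations, which your two steps establish.
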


The following result is due to Kühn and Osthus~\cite{kuhnComb24}, see~\cite{charArXiv25} for a proof of the exact statement.

\begin{lemma}[\cite{charArXiv25}]
    \label{lemma:KO1}
    Let $\Gamma,d$ be real numbers with $\Gamma> 16d\geq 32$. Every bipartite graph $G$ with average degree $\Ad(G) =\Gamma$ contains an induced subgraph $G^\star$ with bipartition $(A^\star, B^\star)$ such that $|A^\star| \geq \frac{\Gamma}{128d}|B^\star|$ and, for every $a\in A^\star$, $4d \leq d_{G^\star}(a) \leq 64d$.
\end{lemma}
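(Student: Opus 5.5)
The plan is to pass to a well-behaved bipartite subgraph and then sample a random subset of one side, so that the sampled side plays the role of $B^\star$. First, since deleting a vertex of degree less than $\Gamma/2$ does not decrease the average degree, we repeatedly delete such vertices and obtain a nonempty induced subgraph $H$ with $\delta(H)\geq\Gamma/2$ and $\Ad(H)\geq\Gamma$. Let $(A,B)$ be its bipartition, named so that $|A|\geq|B|$. Then $e(H)=\Ad(H)(|A|+|B|)/2\geq\Gamma|B|$, and every vertex of $A$ has between $\Gamma/2$ and $|B|$ neighbours in $B$.

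Second, we sort the vertices of $A$ dyadically: let $A_j$ be the set of $a\in A$ with $2^j\leq d_H(a)<2^{j+1}$, and let $e_j$ be the number of edges incident to $A_j$. We fix an index $j$ for which $e_j$ is large compared with $\Gamma|B|$; finding such a $j$ is the delicate point discussed below. We then choose a random subset $B'\subseteq B$, keeping each vertex independently with probability $p=16d/2^{j}$, which is at most $1$ because $2^{j}\geq\Gamma/4>4d$. For $a\in A_j$, the degree $d_{B'}(a)$ is binomial with mean in $[16d,32d)$. By Lemma~\ref{lem:chernoff} (with $\epsilon=3/4$) and Markov's inequality for the upper tail, each $a\in A_j$ satisfies $4d\leq d_{B'}(a)\leq 64d$ with probability at least a fixed constant. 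Here $d\geq2$ makes the Chernoff term $e^{-9\mu/32}\leq e^{-9}$ small, while Markov gives $\PP(d_{B'}(a)>64d)<1/2$, so the probability is at least about $1/2$. Hence the expected number of good vertices in $A_j$ is at least $|A_j|/3$, while $\EE|B'|=p|B|$. By linearity of expectation, some outcome satisfies $\#\text{good}-\lambda|B'|\geq|A_j|/3-\lambda p|B|$. We set $A^\star$ to be the good vertices, $B^\star=B'$, and $G^\star=H[A^\star\cup B^\star]$, which is induced in $G$. The desired ratio $|A^\star|\geq\frac{\Gamma}{128d}|B^\star|$ follows provided $|A_j|\geq c\,\frac{\Gamma}{d}\,p|B|=16c\,\Gamma|B|/2^{j}$, that is, roughly $e_j\asymp|A_j|2^j\geq c'\,\Gamma|B|$ for a suitable small absolute constant $c'$. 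The degenerate case $B'=\emptyset$ is harmless, because good vertices already require $d_{B'}(a)\geq 4d>0$.

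The main obstacle is the choice of $j$. The bound $\sum_j e_j=e(H)\geq\Gamma|B|$ only guarantees a class with $e_j\geq\Gamma|B|/\log$, which loses a logarithmic factor that the statement does not allow. To avoid this, I would first try to reduce to the case where the degrees on side $A$ are concentrated. One route is to take $A$ to be the side whose average degree is at most twice the minimum degree after further cleaning: alternately delete vertices of $B$ whose degree is much larger than $\Gamma$, or swap the roles of the sides. Another route is to exploit that a class $A_j$ with large $2^j$ needs only $|A_j|\geq c\,\Gamma|B|/2^{j}$ vertices, and to choose the weights $2^{-j/2}$ so that the weighted pigeonhole closes with a constant. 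If neither works directly, I would follow Kühn and Osthus's original argument, which iterates the random sampling and passes to the sampled graph whenever the concentrated case fails, with the average degree relative to $|B|$ improving each time. This iteration is exactly where the constants $16$, $128$ and $64$ in the statement come from.
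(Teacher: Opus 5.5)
The paper does not prove this lemma; it imports it from K\"uhn and Osthus via the cited reference. Your proposal therefore has to stand on its own, and it has a genuine gap. The choice of the class $j$ is the whole difficulty, and you leave it open with three unexecuted options. The last of these, ``follow K\"uhn and Osthus'', is not an argument. You lose a logarithm because you pigeonhole over \emph{edges}, $\sum_j e_j=e(H)$.

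The missing idea is to pigeonhole over \emph{vertices}, using the inequality $|A|\geq|B|$ that you derived but never used. Since $d_H(a)\geq\Gamma/2$ for every $a\in A$, only classes with $2^j>\Gamma/4$ are nonempty, and $\sum_{2^j>\Gamma/4}2^{-j}<8/\Gamma$. If $|A_j|<\Gamma|B|/(8\cdot 2^j)$ held for every $j$, summing would give $|A|<|B|$. Hence some $j$ satisfies $|A_j|\,2^j\geq\Gamma|B|/8$, with no logarithmic loss. So the right weights are $2^{-j}$, i.e.\ plain vertex counting, not $2^{-j/2}$.

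Even with this $j$, your parameters do not close. First, $2^j>\Gamma/4>4d$ only gives $16d/2^j<4$, not $p\leq 1$. Second, with $p=16d/2^j$ and success probability about $\frac12$ you need $|A_j|2^j\geq\frac38\Gamma|B|$, which is more than the pigeonhole supplies. Take $p=\min\{1,8d/2^j\}$ instead.
\begin{itemize}
\item If $p=1$, every $a\in A_j$ has $8d<d_H(a)<16d$ and $|A_j|\geq\Gamma|B|/(64d)$, so $H[A_j\cup B]$ already works.
\item Otherwise the means lie in $[8d,16d)$. Lemma~\ref{lem:chernoff} with $\epsilon=\frac12$ gives $\PP(d_{B'}(a)<4d)\leq\e^{-d}\leq\e^{-2}$, and Markov gives $\PP(d_{B'}(a)>64d)<\frac14$. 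So the expected number of good vertices exceeds $0.6|A_j|\geq 0.075\,\Gamma|B|/2^j$. This is larger than $\frac{\Gamma}{128d}\EE|B'|=\frac{1}{16}\Gamma|B|/2^j$, and your linearity-of-expectation step finishes the proof.
\end{itemize}

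A cleaner route reproduces the stated constants exactly and needs no iteration. Take $H$ to be an induced subgraph of \emph{maximum} average degree $\Gamma^*\geq\Gamma$. Then $\delta(H)\geq\Gamma^*/2$, since deleting a vertex of smaller degree would raise the average. Because $|A|\geq|B|$, the average degree on side $A$ is at most $\Gamma^*$. By Markov, more than $|A|/2\geq|B|/2$ vertices of $A$ have degree in $[\Gamma^*/2,2\Gamma^*]$. Sample $B$ with $p=16d/\Gamma^*<1$, which puts the means in $[8d,32d]$. Each such vertex is then good with probability at least $\frac12-\e^{-2}>0.36$, and $0.36\cdot\frac{|B|}{2}>\frac{|B|}{8}=\frac{\Gamma^*}{128d}\EE|B'|$. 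Here $\Gamma>16d$ is used only to make $p<1$, and $d\geq 2$ only to make $\e^{-d}$ small.
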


We make use of the following straightforward calculations several times.

\begin{proposition}
    \label{prop:simplify}
    For every integer $r$ large enough,
    \[
        \binom{r}{\lfloor r/2\rfloor} \leq 2^r
        \mbox{~~~~~and~~~~~}
        \binom{\lfloor r^2/2\rfloor}{r}\cdot \binom{r^2}{r}^{-1} \geq \frac{1}{2^{r+2}}.
    \]
\end{proposition}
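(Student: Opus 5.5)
The first inequality is immediate: $\binom{r}{\lfloor r/2\rfloor}$ is one term of $\sum_{k=0}^{r}\binom{r}{k}=2^r$, so it is at most $2^r$. This holds for every $r\geq 0$, not only for large $r$.

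For the second inequality, set $m=\lfloor r^2/2\rfloor$ and expand both binomials as falling factorials. The $r!$ factors cancel, leaving
\[
\binom{m}{r}\binom{r^2}{r}^{-1}=\prod_{i=0}^{r-1}\frac{m-i}{r^2-i}.
\]
Each factor is at least $\frac{m-i}{r^2}$, since the numerator is nonnegative for $r\geq 2$ and $r^2-i\leq r^2$. Using $m\geq (r^2-1)/2$, we get
\[
m-i\geq \frac{r^2}{2}\Bigl(1-\frac{2i+1}{r^2}\Bigr),
\]
so every factor is at least $\frac12\bigl(1-\frac{2i+1}{r^2}\bigr)$. The product is therefore at least
\[
2^{-r}\prod_{i=0}^{r-1}\Bigl(1-\frac{2i+1}{r^2}\Bigr).
\]

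It remains to show that this last product is at least $1/4$ for large $r$. The factor with $i=r-1$ equals $(1-\frac{2r-1}{r^2})$, which is positive for $r\geq 2$. Every factor has the form $1-x_i$ with $x_i=\frac{2i+1}{r^2}\leq \frac{2}{r}$. For $r$ large, $x_i\leq 1/2$, and on $[0,1/2]$ we have $\ln(1-x)\geq -x-x^2$. Summing gives
\[
\ln\prod_{i}(1-x_i)\geq -\sum_i x_i-\sum_i x_i^2.
\]
Here $\sum_{i=0}^{r-1}(2i+1)=r^2$, so $\sum_i x_i=1$. Also $\sum_i x_i^2\leq r\cdot(2/r)^2=4/r\to 0$. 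Hence the product is at least $e^{-1-4/r}$, which exceeds $1/4$ once $r$ is large, because $e^{-1}\approx 0.368>1/4$.

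Combining the steps, the ratio is at least $2^{-r}\cdot\frac14=2^{-(r+2)}$ for all sufficiently large $r$.
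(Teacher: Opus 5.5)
Your proof is correct and follows the same route as the paper. The first inequality comes from $\sum_i\binom{r}{i}=2^r$, and for the second you write the ratio as $\prod_{i=0}^{r-1}\frac{m-i}{r^2-i}$, pull out $2^{-r}$, and show the remaining product tends to $\e^{-1}>1/4$. The only difference is minor. The paper bounds every factor by the worst one, getting $2^{-r}\bigl(\frac{r^2-2r+1}{r^2-r+1}\bigr)^r$, and uses $\bigl(1-\frac{r}{r^2-r+1}\bigr)^r\to\e^{-1}$. You keep the factors separate and combine $\sum_i x_i=1$ with $\ln(1-x)\geq -x-x^2$, which gives an explicit threshold ($r\geq 11$ suffices).
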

\begin{proof}
    The first inequality follows from the fact that $\sum_{i=0}^r \binom{r}{i}=2^r$. For the second inequality, we have that
    \[
        \frac{\binom{\lfloor r^2/2\rfloor}{r}}{ \binom{r^2}{r}}
        =\frac{\lfloor r^2/2\rfloor(\lfloor r^2/2\rfloor-1)\cdots(\lfloor r^2/2\rfloor-r+1)}{r^2(r^2-1)\cdots(r^2-r+1)}
        \geq\frac{1}{2^r}\left(\frac{r^2-2r+1}{r^2-r+1}\right)^r
        \geq\frac{1}{2^{r+2}},
    \]
    where in the last inequality we used that $\left(\frac{r^2-2r+1}{r^2-r+1}\right)^r=\left(1-\frac{r}{r^2-r+1}\right)^r\rightarrow \e^{-1}$ as $r\rightarrow\infty$ and that $r$ is large enough.
\end{proof}

\section{The proof}

\begin{lemma}
    \label{lemma:key_lem}
    There exist $r_0,k_0\in \mathbb{N}$ such that the following holds for all integers $r\geq r_0$ and $k\geq k_0$. Let $G=(A\cup B, E)$ be a bipartite graph such that:
    \begin{enumerate}[label=(\roman*)]
        \item every vertex $a\in A$ has degree $d(a)\geq k (\ln k)^2\cdot r^32^{r+2}$, and
        \item $|A|\geq r2^{r+3} |B|$.
    \end{enumerate}
    Then, there exists an $r$-uniform list assignment $L$ such that, for every $L$-colouring $\gamma$ of $G$, $G$ admits a subgraph $H_{\gamma}$ such that:
    \begin{itemize}
        \item $|H_\gamma| \geq \frac{1}{2^{r+4}}|G|$,
        \item $H_{\gamma}$ has average degree at least $k$, and
        \item for every edge $uv$ of $H_\gamma$, $\gamma(u) = \gamma(v)$.
    \end{itemize}
\end{lemma}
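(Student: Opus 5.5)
The plan is to adapt Alon's random list assignment with palette $[r^2]$, but to keep track of how many \emph{same-coloured} neighbours a vertex of $A$ is forced to have, rather than merely one. For every $b\in B$, choose $L(b)$ independently and uniformly among the $r$-subsets of $[r^2]$. Choose the lists $L(a)$, $a\in A$, in the same way, independently of the lists on $B$. Set the threshold $t := 2k+1$. Throughout, (i) is assumed to give $d(a)/2^{r+3}>r^2 t$ with plenty of room; this is where the factor $r^3 2^{r+2}$ in (i) is used. I do not expect to need the extra factor $(\ln k)^2$, which only adds slack.

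\textbf{Step 1 (the lists on $B$).} Call $a\in A$ \emph{good} if, for every set $S\subseteq[r^2]$ with $|S|=\lfloor r^2/2\rfloor$, at least $d(a)/2^{r+3}$ neighbours $b$ of $a$ satisfy $L(b)\subseteq S$. For a fixed $S$, Proposition~\ref{prop:simplify} gives $\PP(L(b)\subseteq S)\geq 2^{-(r+2)}$. Lemma~\ref{lem:chernoff} with $\epsilon=1/2$ then bounds the failure probability for that $S$ by $\exp(-d(a)/2^{r+5})$. A union bound over the at most $2^{r^2}$ choices of $S$ keeps the total tiny, because $d(a)\gg r^3 2^{r}$. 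Hence the expected number of non-good vertices is at most $|A|/8$, say. By Markov's inequality, with probability at least $1/2$ at least $\frac34|A|$ vertices are good. I fix such lists on $B$ and let $A_g$ be the set of good vertices.

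\textbf{Step 2 (every colouring of $B$ forces many heavy colours).} Fix any $L$-colouring $\gamma_B$ of $B$. For $a\in A_g$, call a colour $c$ \emph{heavy for $a$} if at least $t$ neighbours of $a$ receive colour $c$, and let $H_a$ be the set of heavy colours. I claim $|H_a|\geq r^2/2$. Suppose instead that $|H_a|<r^2/2$, and choose $S$ of size $\lfloor r^2/2\rfloor$ disjoint from $H_a$. Goodness gives at least $d(a)/2^{r+3}$ neighbours whose lists lie in $S$, and each of them is coloured with a light colour. But light colours account for fewer than $r^2 t<d(a)/2^{r+3}$ neighbours, a contradiction. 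Note that this step is deterministic and holds simultaneously for all $\gamma_B$.

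\textbf{Step 3 (the lists on $A$, via a union bound over colourings of $B$).} For fixed $\gamma_B$ and $a\in A_g$, we have $\PP(L(a)\cap H_a=\emptyset)\leq \binom{\lceil r^2/2\rceil}{r}\binom{r^2}{r}^{-1}\leq 2^{-r}$. These events are independent over $a$. So the probability that at least $|A_g|/2$ good vertices miss their heavy colours is at most $2^{|A_g|}2^{-r|A_g|/2}$. There are at most $r^{|B|}$ colourings $\gamma_B$. Condition (ii) gives $|A_g|\geq \frac34 |A|\gg |B|\log_2 r$, so $r^{|B|}2^{(1-r/2)|A_g|}<1/2$. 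I therefore fix lists on $A$ such that, for every $\gamma_B$, at least $|A_g|/2\geq |A|/4$ vertices $a$ have $L(a)\cap H_a\neq\emptyset$. I expect this step to be the main obstacle. The worry is that $H_a$ depends on $\gamma$, but it depends only on $\gamma_B$, which is why the union bound runs over colourings of $B$ only and why (ii) is exactly what is needed.

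\textbf{Step 4 (building $H_\gamma$).} Now let $\gamma$ be any $L$-colouring of $G$, and let $A'$ be the set of at least $|A|/4$ vertices obtained from Step~3 for $\gamma_B=\gamma|_B$. Since each $a\in A'$ has a heavy colour in $L(a)$, we can pigeonhole among the colours it may take. For the statement to hold for arbitrary $\gamma$, the argument must work for the colour $\gamma(a)$ that $a$ actually receives. So in Step~3 I will instead require $L(a)\subseteq H_a$, or at least $|L(a)\cap H_a|$ large. This is the delicate point, and if it fails I will rework the events around it. Take $H_\gamma$ to be $A'$ together with, for each $a\in A'$, $t$ neighbours of colour $\gamma(a)$ and the corresponding edges. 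Every edge is then monochromatic. Also $|H_\gamma|\geq |A|/4\geq |G|/2^{r+4}$, and the average degree is at least $2t|A'|/(|A'|+|B|)\geq k$ by (ii).
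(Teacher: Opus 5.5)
You have the same architecture as the paper: random lists on $B$ make most of $A$ ``good'' (the paper's ``saturated''), a deterministic pigeonhole gives at least $\lceil r^2/2\rceil$ heavy colours for every colouring of $B$, and then random lists on $A$ are combined with a union bound over the $r^{|B|}$ colourings of $B$. Steps 1 and 2 are correct, and your Chernoff estimate in Step 1 indeed makes the $(\ln k)^2$ factor unnecessary.

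The gap is Step 3, which you flag in Step 4 but do not resolve. The event $L(a)\cap H_a\neq\emptyset$ is useless. Since $\gamma$ is an arbitrary $L$-colouring, $\gamma(a)$ may be taken from $L(a)\setminus H_a$ whenever that set is nonempty. For the same reason, ``$|L(a)\cap H_a|$ large'' does not help. The only event that works is $L(a)\subseteq H_a$. That event has probability only of order $2^{-r}$: you can guarantee $\PP(L(a)\subseteq H_a)\geq\binom{\lfloor r^2/2\rfloor}{r}\binom{r^2}{r}^{-1}\geq 2^{-(r+2)}$, and in general not much more. So the conclusions you draw from Step 3 are not available. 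You cannot get at least $|A_g|/2$ successful vertices for every $\gamma_B$, and hence you cannot get $|A'|\geq|A|/4$.

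What does work is the following. For fixed $\gamma_B$, the events $L(a)\subseteq H_a$ for $a\in A_g$ are independent. Their number therefore dominates a $\mathrm{Bin}(|A_g|,2^{-(r+2)})$ variable. By Lemma~\ref{lem:chernoff} with $\epsilon=\frac12$, it is at least $|A_g|/2^{r+3}$ except with probability $\exp(-|A_g|/2^{r+5})$. Since $|A_g|\geq\frac34|A|\geq\frac34 r2^{r+3}|B|$, this is at most $\exp(-\frac{3}{16}r|B|)$, which still beats the $r^{|B|}$ colourings of $B$.

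This is where the full factor $2^{r+3}$ in (ii) is consumed, not merely $|A_g|\gg|B|\log_2 r$ as you suggest. It is also why the lemma only promises $|H_\gamma|\geq|G|/2^{r+4}$ rather than $|A|/4$. With $|A'|\geq\frac34|A|/2^{r+3}$, which is at least $|B|$ and at least $|G|/2^{r+4}$, your Step 4 construction then goes through verbatim, giving average degree at least $t\geq k$. This corrected Step 3 is exactly the paper's ``truly saturated'' claim.
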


\begin{proof}
    We do not give the exact values of $r_0$ and $k_0$, we simply assume they are large enough so that all upcoming inequalities hold.
    We let $R=\{1,\dots,r^2\}$, and denote by $\Rscr = \binom{R}{\lfloor r^2/2\rfloor}$ the family of subsets of $R$ of size precisely $\lfloor r^2/2\rfloor$. 
    
    We assign to each vertex $b\in B$ an element $L(b)$ from $\binom{R}{r}$ sampled uniformly at random. 
    We say that a vertex $a\in A$ is {\it saturated} if every element $P\in \Rscr$ contains the list of at least $\frac{1}{2}kr^2$ neighbours of $a$. That is, $a$ is saturated if, for every $P\in \Rscr$,
    \[
        |\{b\in N(a) : L(b)\subseteq P\}| \geq \tfrac{1}{2}kr^2.
    \]
    
    \begin{claim}
        With positive probability, at least half of the vertices in $A$ are saturated.
    \end{claim}
    \begin{proofclaim}
        For a fixed element $P\in \Rscr$ and a fixed vertex $b\in B$, observe that $L(b) \subseteq P$ with probability at least
        \[
            \binom{\lfloor r^2/2\rfloor}{r}\cdot \binom{r^2}{r}^{-1} \geq \frac{1}{2^{r+2}},
        \]
        the inequality coming from \Cref{prop:simplify}. It follows that, for any fixed vertex $a\in A$, the probability that a fixed element $P\in \Rscr$ contains the lists of at most $\ell = \lfloor \frac{1}{2}kr^2-\frac{1}{2} \rfloor$ of its neighbours is at most 
        \begin{align*}
            \binom{d(a)}{\ell} \cdot \left( 1-\frac{1}{2^{r+2}} \right)^{d(a)-\ell}
            &\leq \exp\left( \frac{1}{2}kr^2 \ln d(a) - \frac{d(a)-\tfrac{1}{2}kr^2}{2^{r+2}}\right)\\
            &\leq \exp\Bigg( \frac{1}{2}kr^2\ln\Big(k(\ln k)^2r^32^{r+2}\Big) - k(\ln k)^2r^3 + k\Bigg)\\
            &< \exp\left(-\frac{1}{2}k(\ln k)^2r^3\right),
        \end{align*}
        where in the first inequality we use that $1-x \leq \e^{-x}$ for every $x>0$, in the second inequality we use that the expression is decreasing with $d(a)$ for $d(a)\geq kr^2 2^{r+1}$, and in the last inequality we use that $r$ and $k$ are large enough.
        By the union bound and \Cref{prop:simplify}, it follows that
        \begin{align*}
            \PP(a \mbox{ is not saturated}) &< \binom{r^2}{\lfloor r^2/2\rfloor} \cdot \e^{-\frac{1}{2}k(\ln k)^2r^3} \leq 2^{r^2}  \cdot \e^{-\frac{1}{2}k(\ln k)^2r^3}< \frac{1}{2},
        \end{align*}
        where in the last inequality we use again that $r$ is large. Let $A_{\rm s}$ denote the set of saturated vertices of $A$. By linearity of expectation, 
        \[
            \EE(|A_{\rm s}|) \geq \frac{1}{2}|A|,
        \]
        and in particular 
        \[
        \PP\left( |A_{\rm s}| \geq \frac{1}{2}|A| \right) >0.
        \]
    \end{proofclaim}
    From now on, we fix an $r$-list assignment $L_B$ of $B$ for which at least half of the vertices in $A$ are saturated, the existence of which is guaranteed by the claim above. We let $A_{\rm s}$ be the set of saturated vertices of $A$.

    Now we assign to each vertex $a\in A$ an element $L(a)$ from $\binom{R}{r}$, again, sampled uniformly at random. Given any $L_B$-colouring $\beta$ of $B$, a vertex $a\in A$ is {\it truly saturated} with respect to $\beta$ if, for every $c\in L(a)$, $a$ has at least $k$ distinct neighbours $b$ such that $\beta(b) = c$.
    For every $L_B$-colouring $\beta$ of $B$, we denote by $X_{\beta}$ the number of truly saturated vertices in $A$ with respect to $\beta$.

    \begin{claim}
        With positive probability, $X_{\beta} \geq \frac{1}{2^{r+4}}|A|$ for every $L_B$-colouring $\beta$ of $B$.
    \end{claim}
    \begin{proofclaim}
        Let us fix any $L_B$-colouring $\beta$ of $B$. 
        We let $A^\star$ be the set of vertices $a\in A_{\rm s}$ that are truly saturated (with respect to $\beta$). In particular, $X_{\beta}\geq |A^\star|$.
        We first argue that it is very likely that $|A^\star|$ is large.
        For this, let us fix a saturated vertex $a\in A_{\rm s}$. A colour $c\in \{1,\dots,r^2\}$ is {\it available} for $a$ if $a$ has at most $k-1$ neighbours coloured $c$ via $\beta$.
        Note that $a\in A^\star$ if and only if $L(a)$ is disjoint from the set of colours available for $a$.
        
        We claim that there are at most $\frac{1}{2}r^2-1$ available colours for $a$. To see this, assume that $\lfloor r^2/2\rfloor$ colours are available for $a$, and let $P$ be be any set of exactly $\lfloor r^2/2\rfloor$ such colours. By definition of being saturated, $a$ has at least $\frac{1}{2}kr^2$ neighbours $b$ such that $L_B(b) \subseteq P$. By the Pigeonhole Principle, among these neighbours, $k$ share a common colour $c\in P$ via $\beta$, hence implying that $c$ is not available for $a$, a contradiction.

        Therefore, among the elements of $\binom{R}{r}$, at least $\binom{\lceil r^2/2\rceil}{r}$ are disjoint from the set of colours available for $a$. By \Cref{prop:simplify}, it follows that 
        \begin{equation*}
            \PP(a \in A^\star) \geq \binom{\lceil r^2/2\rceil}{r}\cdot \binom{r^2}{r}^{-1} \geq \frac{1}{2^{r+2}}.
        \end{equation*}
        Note that the events $a\in A^\star$ and $a'\in A^\star$ are independent for every $a'\in A_{\rm s}\setminus\{a\}$.
        In particular, for every integer $\ell$, we have
        \[
            \PP\Big(|A^\star| \leq \ell \Big) \leq \PP\Big( \Bscr \leq \ell \Big),
        \]
        where $\Bscr$ is a random variable following a binomial law with parameters $p = \frac{1}{2^{r+2}}$ and $n = |A_{\rm s}|$. By Chernoff's inequality (\Cref{lem:chernoff}), we thus have
        \begin{align*}
            \PP\left(|A^\star| \leq \frac{1}{2^{r+3}}|A_{\rm s}| \right) &\leq \PP\left( \Bscr \leq \frac{1}{2^{r+3}}|A_{\rm s}| \right),\\
            &\leq \PP\left( \Bscr \leq \tfrac{1}{2}\cdot \EE(\Bscr) \right)\\
            &\leq \exp\left( -\frac{1}{8}\cdot \frac{1}{2^{r+2}} |A_{\rm s}|  \right)\\
            &\leq \exp\left( -\frac{r}{8} |B| \right),
        \end{align*}
        where in the last inequality we used that $|A_{\rm s}| \geq \frac{1}{2}|A| \geq r2^{r+2}|B|$. 
        Recall that all vertices in $A^\star$ are truly saturated, so $X_{\beta} \geq |A^\star|$, and it follows from the inequality above that 
        \[
            \PP\left( X_{\beta} \leq \frac{1}{2^{r+4}}|A| \right) \leq \PP\left( X_{\beta} \leq \frac{1}{2^{r+3}}|A_{\rm s}| \right) \leq \exp\left( -\frac{r}{8} |B| \right).
        \]
        There are $r^{|B|}$ distinct $L_B$-colourings of $B$. By the union bound, it thus follows that
        \[
            \PP\left(\exists \beta: X_{\beta}\leq \frac{1}{2^{r+4}}|A|\right)  \leq 
            \exp\Big( (-\tfrac{1}{8}r + \ln r)|B|\Big) < 1,
        \]
        where, in the last inequality, we use that $r$ is sufficiently large.
    \end{proofclaim}

    From now on, we fix a list assignment $L_A$ such that $|X_{\beta}| \geq \frac{1}{2^{r+4}}|A|$ for every $L_B$-colouring $\beta$ of $B$, the existence of which is guaranteed by the claim above.

    We let $L= L_A\cup L_B$, and claim that the statement holds for $L$. To see this, let $\gamma$ be any $L$-colouring of $G$ and let $\beta$ be its restriction to $B$. Let $A'$ be the set of vertices in $A$ that have at least $k$ neighbours in its own colour class. By choice of $L$, we have $|A'| \geq \frac{1}{2^{r+4}}|A| \geq |B|$. Let $H_\gamma$ be the bipartite graph with vertex set $A'\cup B$ containing all monochromatic edges of $G[A'\cup B]$ (that is, edges $uv$ of $G[A'\cup B]$ such that $\gamma(u) = \gamma(v)$). The result follows.
\end{proof}

\begin{remark}
It seems that a version of Lemma \ref{lemma:key_lem} can also be obtained using the proof method of Theorem 6 in \cite{Kang13}. As we did not try to optimize the constants in the statement of the lemma, it is not completely clear which of these approaches would yield the best constants.
\end{remark}

\begin{lemma}
    \label{lemma:key_lem_dic}
    There exists $r_0\in \mathbb{N}$ such that the following holds for every integer $r\geq r_0$. Let $G=(A\cup B, E)$ be a bipartite graph such that:
    \begin{enumerate}[label=(\roman*)]
        \item every vertex $a\in A$ has degree $d(a)\geq r^6 2^{2r+8}$, and
        \item $|A|\geq r2^{r+3}|B|$.
    \end{enumerate}
    Then, there exists an orientation $D$ of $G$ such that $\dic_\ell(D) \geq r$.
\end{lemma}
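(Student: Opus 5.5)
Apply Lemma~\ref{lemma:key_lem} with $r$ as given and a suitable $k$, and then orient $G$ uniformly at random, using Lemma~\ref{lem:prob_orientation_acyclic} to rule out every possible acyclic monochromatic structure at once. I would take $k$ of order $r^2$, say $k = c r^2$ for a large absolute constant $c$. Then $k(\ln k)^2 r^3 2^{r+2} = O(r^5 (\ln r)^2 2^{r})$, which is at most $r^6 2^{2r+8}$ once $r$ is large, so hypothesis (i) of Lemma~\ref{lemma:key_lem} holds; hypothesis (ii) is the same. The lemma yields an $r$-uniform list assignment $L$ such that every $L$-colouring $\gamma$ has a monochromatic subgraph $H_\gamma$ with $|H_\gamma|\geq |G|/2^{r+4}$ and average degree at least $k$.

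Let $D$ be a uniformly random orientation of $G$. If $\gamma$ is an $L$-dicolouring of $D$, then every colour class is acyclic. Since $H_\gamma$ is a union of monochromatic edges, each of its components lies in one colour class, so the orientation of $H_\gamma$ inherited from $D$ is acyclic. Hence it suffices to show that, with positive probability, for every subgraph $H$ of $G$ with $|H|\geq n/2^{r+4}$ (where $n=|G|$) and $\Ad(H)\geq k$, the orientation $D[H]$ restricted to the edges of $H$ is not acyclic.

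\textbf{Union bound over subgraphs.} Passing to a subgraph of $H$ preserves acyclicity, so I may take $H$ minimal in a suitable sense, but a crude count is simpler. By Lemma~\ref{lem:prob_orientation_acyclic}, a fixed $H$ with $m\geq n/2^{r+4}$ vertices and average degree $\Gamma\geq k$ is acyclic with probability at most $((\Gamma+1)2^{-\Gamma/2})^{m}\leq 2^{-km/3}$ for $k$ large. The obstacle is that the number of subgraphs (edge sets) of $G$ is far too large to union-bound over directly. To fix this, I would reduce to subgraphs with few edges: from $H$, by keeping a random or greedy sub-selection, extract a spanning subgraph $H'$ on the same vertex set with average degree between $k/2$ and $k$, which is still acyclic. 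Then I count only graphs on $m$ vertices with at most $km/2$ edges chosen from $G$. Vertex sets contribute at most $2^n$. Edges can be chosen through their $A$-endpoints' incidences, but since degrees in $G$ are unbounded, $\binom{e(G)}{km/2}$ could still be large.

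To control this I would exploit the bipartite structure. Every edge of $H_\gamma$ (and of $H'$) has an endpoint in $B$, and $|B|\leq |A|/(r2^{r+3})$ is small. A better route is to apply the count with degree caps: choose for each vertex of $H'$ in $A$ at most $k$ incident edges. Then the number of choices is at most $\prod_{a}\binom{d(a)}{\leq k}$, which is still unbounded in $d(a)$. This is the main obstacle. I expect the fix is to first regularise $G$ by Lemma~\ref{lemma:KO1}-type degree bounds, or more simply to trim $G$ beforehand to a subgraph in which every $a\in A$ has degree exactly $\Delta:=\lceil r^6 2^{2r+8}\rceil$. Then the number of choices is at most $2^n \cdot \binom{\Delta}{k}^{n}\leq 2^{n(1+k\log_2(e\Delta/k))}=2^{O(nkr)}$, while the failure probability per $H'$ is $2^{-\Omega(k n/2^{r+4})}$.

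These two bounds do not balance, since $2^{-r}$ kills the exponent. So instead I would union-bound over colourings rather than subgraphs. There are at most $r^{n}$ $L$-colourings $\gamma$, and each determines $H_\gamma$, so I need $\Pr(H_\gamma \text{ acyclic})\cdot r^n<1$, that is, $k n/(3\cdot 2^{r+4}) > n\log_2 r$. This forces $k \gtrsim 2^{r}\log r$, so I would instead choose $k = r^2 2^{r+6}$. Rechecking (i), $k(\ln k)^2 r^3 2^{r+2}=O(r^7 2^{2r})$, which is marginally above $r^6 2^{2r+8}$. This is exactly the delicate bookkeeping step; I would try $k = 2^{r+7}\log_2 r$, giving $O(r^5\log r\cdot 2^{2r})\le r^62^{2r+8}$, which works.
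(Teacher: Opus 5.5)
Your final argument, once the abandoned subgraph-counting detour is set aside, is the paper's proof: apply Lemma~\ref{lemma:key_lem} with $k$ of order $2^{r}$ times a slowly growing factor, orient $G$ uniformly at random, bound the probability that each fixed $H_\gamma$ is acyclic via Lemma~\ref{lem:prob_orientation_acyclic}, and union-bound over the at most $r^n$ $L$-colourings. Your choice $k=\lceil 2^{r+7}\log_2 r\rceil$ satisfies hypothesis (i) and beats the $r^n$ factor, just as the paper's $k=r2^{r+6}$ does; note that your rejected $k=r^22^{r+6}$ overshoots (i) by a factor of about $r$, not marginally.
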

\begin{proof}
    Let $n$ denote the order of $G$.
    As $r$ is large enough, by Lemma~\ref{lemma:key_lem} applied with $k=r2^{r+6}$, there exists an $r$-list assignment $L$ such that, for every $L$-colouring $\gamma$ of $G$, $G$ admits a subgraph $H_\gamma$ with at least $\frac{n}{2^{r+4}}$ vertices and average degree at least $r2^{r+6}$, whose edges are all monochromatic. We fix such a list assignment and such a graph $H_\gamma$ for each $L$-colouring $\gamma$. 
    
    To prove the result, it is sufficient to justify the existence of an orientation $D$ of $G$ for which, for every $L$-colouring $\gamma$ of $G$, the corresponding induced orientation $D_{\gamma}$ of $H_\gamma$ contains a directed cycle. Indeed, any cycle of $H_{\gamma}$ belongs to a connected component of $H_{\gamma}$, and hence it is monochromatic under $\gamma$. Let us check that, if we let $D$ be a random orientation of $G$, the above property holds with positive probability.
    Using  \Cref{lem:prob_orientation_acyclic} and the fact that $r$ is large enough, we have that
    \[
        \PP(D_{\gamma} \mbox{ is acyclic}) \leq 2^{(\log_2(r2^{r+6}+1)-r2^{r+5})|H_{\gamma}|} \leq 2^{-r2^{r+4}\cdot|H_{\gamma}|} \leq 2^{-rn}
    \]
    for every $\gamma$. Recall that there exist at most $r^n$ $L$-colourings of $G$. Therefore, by the union bound,
    \[
        \PP(\exists \gamma : D_{\gamma} \mbox{ is acyclic}) \leq 2^{(-r + \log_2 r)n} < 1.
    \]
    The result follows.
\end{proof}

Now Theorem~\ref{thm:main} follows directly from the following.

\begin{restatable}{theorem}{mainthm}
    There exists $r_0\in \mathbb{N}$ such that the following holds for every integer $r\geq r_0$. 
    Every graph $G$ with average degree $\Ad(G) \geq r^72^{3r+17}$ satisfies $\dicho(G)\geq r$. 
\end{restatable}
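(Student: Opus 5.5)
The plan is to reduce to Lemma~\ref{lemma:key_lem_dic} via Lemma~\ref{lemma:KO1}.

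First, pass to a bipartite subgraph. Every graph has a bipartite spanning subgraph containing at least half of its edges. So $G$ contains a bipartite subgraph $G_1$ with $\Ad(G_1)\geq \Ad(G)/2 \geq r^7 2^{3r+16}$. Since $\dicho$ is monotone under taking subgraphs (an orientation of a subgraph extends to an orientation of $G$, and a bad list assignment for the subgraph extends by arbitrary lists), it suffices to show $\dicho(G_1)\geq r$.

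Next, apply Lemma~\ref{lemma:KO1} to $G_1$ with $\Gamma=\Ad(G_1)$ and $d = r^6 2^{2r+6}$. We need $\Gamma>16d\geq 32$, i.e. $r^7 2^{3r+16} > r^6 2^{2r+10}$, which clearly holds. This gives an induced subgraph $G^\star$ with bipartition $(A^\star,B^\star)$ such that every $a\in A^\star$ satisfies $d_{G^\star}(a)\geq 4d = r^6 2^{2r+8}$, which is exactly condition (i) of Lemma~\ref{lemma:key_lem_dic}.

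For condition (ii), Lemma~\ref{lemma:KO1} gives
\[
|A^\star| \geq \frac{\Gamma}{128 d}|B^\star| \geq \frac{r^7 2^{3r+16}}{2^7 r^6 2^{2r+6}}|B^\star| = r2^{r+3}|B^\star|.
\]
So (ii) holds with room to spare; the constant $2^{17}$ in the hypothesis was evidently chosen to make this work.

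Lemma~\ref{lemma:key_lem_dic} then yields an orientation $D^\star$ of $G^\star$ with $\dicho(D^\star)\geq r$. Extend $D^\star$ arbitrarily to an orientation $D$ of $G$. Given an $(r-1)$-list assignment of $D^\star$ admitting no dicolouring, extend it to all of $V(G)$ arbitrarily. Any dicolouring of $D$ would restrict to a dicolouring of $D^\star$, since induced subdigraphs of acyclic digraphs are acyclic. Hence $\dicho(D)\geq r$.

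The only delicate point is the parameter bookkeeping: the condition $\Gamma>16d$, the halving from the bipartite reduction, and the factor $128$. With the stated constants this arithmetic appears to close exactly. Theorem~\ref{thm:main} then follows, since minimum degree $d$ implies average degree at least $d$, and $r^7 2^{3r+17}\leq d$ permits $r=(\tfrac13-o(1))\log_2 d$.
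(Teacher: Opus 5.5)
Your proposal is correct and follows the paper's proof: a max-cut gives a bipartite subgraph of average degree at least $r^7 2^{3r+16}$, then Lemma~\ref{lemma:KO1} with $d=r^6 2^{2r+6}$, then Lemma~\ref{lemma:key_lem_dic}, and your monotonicity remarks fill in a step the paper leaves implicit. Two small slips: condition (ii) holds exactly, since $\Gamma/(128d)\geq r2^{r+3}$, not ``with room to spare''; and since $G^\star$ need not be induced in $G$, the restriction step needs that arbitrary subdigraphs (not just induced ones) of acyclic digraphs are acyclic, which is equally true.
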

\begin{proof}
    Let us fix such a graph $G$. By taking a maximum cut of $G$, it is straightforward to check that $G$ contains, as a subgraph, a bipartite graph with average degree $\Gamma \geq \frac{1}{2}\Ad(G) \geq r^72^{3r+16}$. By Lemma~\ref{lemma:KO1} applied with $d=r^6 2^{2r+6}$, it follows that $G$ contains a bipartite graph $G'$ with bipartition $(A',B')$ such that $|A'| \geq r2^{r+3}|B'|$ and every vertex in $A'$ has degree at least $r^6 2^{2r+8}$.
    The result then follows from Lemma~\ref{lemma:key_lem_dic}.
\end{proof}

\section*{Acknowledgements} We are thankful to Ross Kang for helpful suggestions.

\bibliographystyle{abbrv}
\bibliography{refs}

\end{document}